\theoremstyle{plain}
\newtheorem{theorem}{Theorem}[section]
\theoremstyle{definition}
\newtheorem{definition}[theorem]{Definition}
\newtheorem{example}[theorem]{Example}
\newtheorem{remark}[theorem]{Remark}
\newtheorem{algorithm}[theorem]{Algorithm}
\numberwithin{equation}{section} 
\newcounter{mnotecounter}
\newcommand{\F}{\mathbb{F}}
\newcommand{\C}{\mathbb{C}}
\newcommand{\N}{\mathbb{N}}
\newcommand{\mB}{\mathcal{B}}
\newcommand{\mV}{\mathcal{V}}
\newcommand{\mP}{\mathcal{P}}
\newcommand{\mR}{\mathcal{R}}
\newcommand{\mA}{\mathcal{A}}
\newcommand{\mT}{\mathcal{T}}
\newcommand{\ot}{\,\overline{t}\,}
\newcommand{\odelta}{\,\overline{\delta}\,}
\newcommand{\ox}{\,\overline{x}\,}
\newcommand{\OT}{\,\overline{T}\,}
\newcommand{\ODelta}{\,\overline{\Delta}\,}
\newcommand{\OX}{\,\overline{X}\,}
\newcommand{\BP}{\mB_\mP}
\newcommand{\VP}{\mV_\mP}
\newcommand{\AP}{\mA_\mP}
\newcommand{\DP}{\mathcal{D}_\mP}
\newcommand{\VT}{\mV_{\mathbb{T}}}
\newcommand{\ii}{\mathtt{i}}   
\DeclareMathOperator{\Ima}{Im}
\DeclareMathOperator{\Dom}{Dom}
\DeclareMathOperator{\genus}{genus}
\newcommand{\menos}{\smallsetminus}
\begin{document}

\title{Algebraic and algorithmic aspects of radical parametrizations\tnoteref{t1}}
\tnotetext[t1]{Partially supported by the Spanish \emph{Ministerio de Econom\'ia y Competitividad} under Project MTM2014-54141-P, and by Junta de Extremadura and FEDER funds.}

\author[uah]{J. Rafael Sendra} \ead{Rafael.Sendra@uah.es}
\author[uex]{David Sevilla\corref{c1}\fnref{group-uex}} \ead{sevillad@unex.es}
\author[uah]{Carlos Villarino} \ead{Carlos.Villarino@uah.es}

\fntext[group-uex]{Member of the research group GADAC (Ref. FQM024).}
\cortext[c1]{Corresponding author}

\address[uah]{Research group ASYNACS \\ Dept. of Physics and Mathematics, University of Alcal\'a \\ E-28871 Alcal\'a de Henares (Madrid, Spain)}
\address[uex]{Campus of M\'erida, University of Extremadura \\ Av. Santa Teresa de Jornet 38 \\ E-06800 M\'erida (Badajoz, Spain)}

\begin{abstract}
In this article algebraic constructions are introduced in order to study the variety defined by a radical parametrization (a tuple of functions involving complex numbers, $n$ variables, the four field operations and radical extractions). We provide algorithms to implicitize radical parametrizations and to check whether a radical parametrization can be reparametrized into a rational parametrization.
\end{abstract}

\begin{keyword}
radical parametrization \sep tracing index \sep reparametrization \sep implicitization
\end{keyword}

\maketitle

\section{Introduction}

It is well known that in many applications dealing with geometric objects, parametric representations are very useful (see \cite{HoschekLasser1993a}). In general, when one works with parametric representations of algebraic curves and surfaces, the involved functions are rational. Nevertheless it is well known that only genus 0 curves, and arithmetic and plurigenus 0 surfaces, have this property. Furthermore, even if we limit our parametrizations to the rational case, many of the  geometric constructions in CAGD, as e.g. offsetting, conchoidal or cissoid constructions, do not propagate the rationality of the geometric object. This means that even though the original object is rational the new object is not in general.

One possible way to overcome this limitation is to work with piecewise approximate parametrizations. A second way is to extend the family of functions  used in the parametric representation; for instance using radicals of polynomials. The latter is the frame of this paper. When dealing with curves, algorithms to parametrize with radicals can be found in \cite{SendraSevilla2011a} and \cite{Harrison2013a} that cover the cases of genus less or equal to 6. In addition, in \cite{SendraSevilla2013a} one can find algorithms to parametrize by radicals certain classes of surfaces. An additional interesting property of radical parametrizations is that the radical nature of a variety is preserved under geometric constructions of degree up to 4 (see Section 5 in \cite{SendraSevilla2013a} for further details). So, in particular, offsets and  conchoids of radical varieties are radical (see Corollaries 5.2 and 5.3. in \cite{SendraSevilla2013a}).

In this article we continue the exploration of radical parametrizations initiated in \cite{SendraSevilla2011a} and \cite{SendraSevilla2013a}. Here we introduce a framework to manipulate these parametrizations in a rational way by means of rational auxiliary varieties and maps. This allows us to apply results of algebraic geometry to derive conclusions on the radical parametrization and its image. In short, to translate radical statements into rational ones. More precisely, from the theoretical point of view, we introduce the notion  of radical variety associated to a radical parametrization, and we prove that it is irreducible and of dimension equal to the number of parameters in the parametrization. Furthermore, we introduce the notion of tracing index of radical parametrization that extends the notion of properness of rational parametrization (see \cite{SendraWinkler2001a}). In addition, we define an algebraic variety, that we call tower variety, which is birationally equivalent to the radical variety when the tracing index is 1. The most interesting property of the tower variety is that it encodes rationally the information of the radical parametrization. From the algorithmic point of view, we show how to compute generators of the radical variety and of the tower variety, in particular an implicitization algorithm for radical parametrizations, and how to compute the tracing index. As a potential application we present an algorithm to decide, and actually compute, whether a given radical parametrization can be reparametrized into a rational parametrization. Also, we show how the tower variety may help to compute symbolically integrals whose integrand is a rational function of radicals of polynomials.

The paper is structured as follows: in Section~\ref{SECTION: notion of radparam} we recall the notion of radical parametrization and we discuss how to represent them.
In Section \ref{SECTION: radical variety} we introduce the concept of radical variety and we prove some of the main properties. In Section \ref{SECTION: rational vs radical} the tower variety is defined, properties are presented, and its application to check the reparametrizability of radical parametrizations into rational parametrizations is illustrated.

\section{The notion of radical parametrization}\label{SECTION: notion of radparam}

A radical parametrization is, intuitively speaking, a tuple \linebreak $\ox=(x_1(\ot),\ldots,x_r(\overline{t}))$ of functions of variables $\overline{t}=(t_1,\ldots,t_n)$ which are constructed by repeated application of sums, differences, products, quotients and roots of any index; we will assume in the sequel that $r>n$. More formally, a radical parametrization is a tuple of elements of a radical extension of the field $\C(\overline{t})$ of rational functions in the variables $\overline{t}$. In the following we approach the concept by means of Field Theory.

\begin{definition}\label{DEF: radical tower and radical param}
A \emph{radical tower over $\C(\ot)$} is a tower of field extensions $\F_0=\C(\ot)\subseteq\dots\subseteq\F_{m-1}\subseteq\F_{m}$ such that $\F_i=\F_{i-1}(\delta_i)=\F_0(\delta_1,\ldots,\delta_i)$ with $\delta_i^{e_i}=\alpha_i\in\F_{i-1}$, $e_i\in\N$. In particular, $\C(\ot)$ is a radical tower over itself.
\end{definition}

\begin{definition}\label{DEF: rad param}
A \emph{radical parametrization} is a tuple $\ox(\ot)$ of elements of the last field $\F_m$ of some radical tower over $\C(\ot)$, such that their Jacobian has rank $n$.
\end{definition}

\begin{remark} $ $
	\begin{enumerate}
		\item A rational parametrization is a radical parametrization.
		\item The Jacobian is defined by extension of the canonical derivations $\frac{\partial}{\partial t_i}$ from $\C(\ot)$ to $\F_m$ (see \cite[Chapter 8.5, Theorem 5.1]{Lang2002a}, or \cite[Chapter II.17, Corollary 2]{ZariskiSamuel1975a}  for the formal details). One can calculate the derivatives of the $\delta$'s recursively as follows: for each expression $\delta_i^{e_i}=\alpha_i$ in the definition of the tower we write a relation $\Delta_i^{e_i}=\alpha_i(\Delta_1,\ldots,\Delta_{i-1})$ where $\Delta_1,\ldots,\Delta_i$ are new variables dependent on the $\ot$. Then we can differentiate with respect to any $t_i$ to obtain
		\[
		e_i\Delta_i^{e_i-1}\frac{\partial\Delta_i}{\partial t_j}=\frac{\partial\alpha_i}{\partial t_j},
		\]
		the right hand side involving $\Delta_1,\ldots,\Delta_{i-1}$ and their derivatives. Substituting the $\delta$'s into the $\Delta$'s we obtain an explicit relation between $\frac{\partial\delta_i}{\partial t_j}$ and the previous partial derivatives.
	\end{enumerate}
\end{remark}

Let us illustrate the notion of radical parametrization with an example, in order to relate the usual way of writing radical expressions to our Definition \ref{DEF: rad param}. See for instance \cite{CavinessFateman1976a} and \cite[Section 2.6]{DavenportSiretTournier1988a} for more information on the topic of representation and simplification of radical functions.
\begin{example}\label{EX: 1st radicalparam from radexpr}
	One would expect that the expression
	\begin{equation}\label{param1}
	\displaystyle\left(\frac{1}{\sqrt[6]{t}\sqrt[3]{t}-\sqrt{t}},t\right)
	\end{equation}
	is not defined because the denominator is zero. This is due to the default interpretations of the roots as the principal branches (i.e. $\sqrt[n]{1}=+1$). Let us try to be more explicit about those branches by interpreting \eqref{param1} as a parametrization in  the sense of Definition \ref{DEF: rad param}.
	
	For this purpose, we need to introduce a  radical tower over $\F_0=\C(t)$. We can consider the following tower
	\[ \mbox{$\mathbb{T} := \left[\F_0\subset \F_0(\delta_1) \subset \F_0(\delta_1,\delta_2) \subset \F_0(\delta_1,\delta_2,\delta_3),\right.$ where $\left.\delta_{1}^{2}=t, \, \delta_{2}^{3}=t,\,\delta_{3}^6=t \right]$.} \]
	Note that there are different choices for the $\delta_i$, but all possible choices of conjugates generate the same tower. Thus, we can write the parametrization as
	\begin{equation}\label{param2}
	\displaystyle\left(\frac{1}{\delta_3\delta_2-\delta_1},t\right)
	\end{equation}	
	In order to specify the radical parametrization, we will fix particular choices of the $\delta_i$. For that, one possibility is to provide a value $t_0\in\C$ as well as values $\delta_{1}(t_0),\delta_2(t_0),\delta_3(t_0)$ such that \eqref{param2} is well-defined.	
	 For instance, $\delta_1(1)=-1,\delta_2(1)=e^{2\pi \ii/3},\delta_3(1)=+1$ will produce the expression
	\begin{equation}\label{param2b}
	\displaystyle\left(\frac{e^{-\pi \ii/3}}{\sqrt{t}},t\right)
	\end{equation}
	where the latter root denotes the principal branch. Note that for this particular election the Jacobian has the required rank. Finally, observe that not all choices of the $\delta_i$ are valid. For instance, if we choose $\delta_1(1)=\delta_2(1)=\delta_3(1)=1$, then  the denominator is identically zero. Summarizing, \eqref{param1} is not a radical parametrization unless we specify the tower and we choose the $\delta_i(t)$ branches properly. A conveniently compact notation for \eqref{param2b} is
	\[
	\displaystyle\left\{\left(\frac{1}{\delta_3\delta_2-\delta_1},t\right), \mathbb{T}, \delta_1(1)=-1,\delta_2(1)=e^{2\pi i/3},\delta_3(1)=1 \right\}.
	\]
	Other towers can be used in this construction, for example  $\mathbb{T}' := [\C(t) \subset \C(t)(\delta_1) \subset \C(t,\delta_1)(\delta_2), \delta_{1}^{2} = t, \delta_{2}^{6}
= t]$ and the branch choices
$\delta_1(1) = -1, \delta_{2}(1) = 1$. Then the expression (\ref{param1}) is specified to the radical parametrization
\[ \left(\frac{1}{2\sqrt{t}},t\right) \]
where the latter root denotes the principal branch.

We will not address in this article the problems of finding optimally simple or canonical ways of expressing radical parametrizations.
\end{example}

\begin{example}
	Let us generate a radical parametrization from the expression
	\begin{equation}\label{param3}
	(x(t),y(t)) = \left( t+\frac{\sqrt[4]{t^3+2t}}{\sqrt{t^2-\sqrt[3]{t-1}}} , \frac{\sqrt[4]{5\sqrt[3]{t-1}+1}}{t^3+5} \right)
	\end{equation}
	We consider the tower
	\[ \mathbb{T} := \renewcommand{\arraystretch}{1.2}\left[\begin{array}{l} \C(t)\subset \F_1:=\C(t)(\delta_1) \subset \F_2:=\F_1(\delta_2) \subset \F_3:=\F_2(\delta_3) \subset \F_4:=\F_3(\delta_4),\\
	\mbox{where $\delta_{1}^{3}=t-1, \ \delta_{2}^{2}=t^2-\delta_1, \delta_{3}^{4}=5\delta_{1}+1, \ \delta_{4}^{4}=t^3+2t$}\end{array} \right]. \]
	Then, one possible radical parametrization from the ambiguous expression \eqref{param3} is
	\[
	\displaystyle\left\{\left(t+\frac{\delta_{4}}{\delta_2},\frac{\delta_3}{t^3+5}\right), \mathbb{T}, \delta_1(2)=1,\delta_{2}(2)=+\sqrt{3},\delta_{3}(2)=+\sqrt[4]{6},\delta_{4}(2)=+\sqrt[4]{12} \right\}.
	\]
	Another way of writing the same parametrization is
	\[
	\displaystyle\left\{\left(t+\ii\,\frac{\delta_{4}}{\delta_2},\frac{\delta_3}{t^3+5}\right), \mathbb{T}, \delta_1(2)=1,\delta_{2}(2)=-\sqrt{3},\delta_{3}(2)=+\sqrt[4]{6},\delta_{4}(2)=\ii\sqrt[4]{12} \right\}.
	\]
	Note that we have changed the definitions of $\delta_2$ and $\delta_4$ and compensated the change with a coefficient in the parametrization.
\end{example}

\section{The variety of a radical parametrization}\label{SECTION: radical variety}

In this section we associate to a radical parametrization an algebraic variety that coincides with the intuitive notion of the (Zariski closure of the) image of the vector function defined naturally from the parametrization. Furthermore we establish some of its main properties. With the notations of the previous section, let
\[
\mP=\left\{(x_1(\ot),\ldots,x_r(\ot)),\  \mathbb{T},\ \delta_{1}(\ot_0)=a_1,\ldots,\delta_{m}(\ot_0)=a_m \right\}
\]
be a radical parametrization. Note that the $\delta$'s  and the $x$'s  (see Definitions \ref{DEF: radical tower and radical param} and \ref{DEF: rad param}) are not necessarily rational over $\ot$, but they can be expressed rationally as
\begin{equation}\label{ratfunsexpr}
\begin{split}
\alpha_1=\frac{\alpha_{1N}(\ot)}{\alpha_{1D}(\ot)}, \ \alpha_2=\frac{\alpha_{2N}(\ot,\delta_1)}{\alpha_{2D}(\ot,\delta_1)}, \ \ldots, \ \alpha_m=\frac{\alpha_{mN}(\ot,\delta_1,\ldots,\delta_{m-1})}{\alpha_{mD}(\ot,\delta_1,\ldots,\delta_{m-1})}, \\ x_i(\ot)=\frac{x_{iN}(\ot,\odelta)}{x_{iD}(\ot,\odelta)} \quad \mbox{where $\odelta=(\delta_1(\ot),\ldots,\delta_m(\ot))$}.
\end{split}
\end{equation}
\begin{remark}
The rational functions in \eqref{ratfunsexpr} are not uniquely determined by the algebraic elements that define $\mP$, due to the $\delta$'s being algebraic elements over the rational function field. In the objects defined below we indicate only the dependence of $\mP$ as subindices for simplicity of notation.
\end{remark}

For our constructions we will define the maps $\varphi$ and $\pi$, whose domains will be specified later where needed. Let
\begin{equation}\label{EQ: psi and varphi}
\varphi\colon\C^n\to\C^{n+m+r}\colon\ot\mapsto(\ot,\odelta,\ox)
\end{equation}

\begin{definition}\label{DEF: incidence variety}
The \emph{incidence variety} associated to this representation of $\mP$ is $\BP=\overline{\Ima(\varphi)}\subset\C^{n+m+r}$.
\end{definition}

\begin{remark}
The notation $\BP$ should not mislead the reader into thinking that $\mP$ determines the indicence variety, since $\BP$ depends crucially on the representation of $\mP$: the tower and the functions introduced at the beginning of this section.
\end{remark}

The last map that we define is the projection from $\BP$ onto the variables of interest, the $\ox$:
\[
\pi\colon\BP\subset\C^{n+m+r}\to\C^r\colon(\ot,\odelta,\ox)\mapsto\ox.
\]
\begin{definition}\label{DEF: radical variety}
The \emph{radical variety} associated to the parametrization $\mP$ is $\VP=\overline{\pi(\BP)}\subset\C^r$.
\end{definition}

In the rest of this section we will prove some basic properties of these objects, among them that $\BP$ and $\VP$ are irreducible of dimension $n$. To this end we define an auxiliary variety, the variety $\AP$, that also has computational interest. For this purpose, we introduce the tuples of variables
\[
\OT=(T_1,\ldots,T_n),\ODelta=(\Delta_1,\ldots,\Delta_m), \,\OX=(X_1,\ldots,X_r)\,\,\, \text{and} \,\, Z.
\]
In the ring $\C[\OT,\ODelta,\OX,Z]$ we define the polynomials:
\begin{equation}\label{EQ: AP unprojected}
\renewcommand{\arraystretch}{1.5}
\begin{array}{l}
E_1 = (\Delta_1)^{e_1}\cdot\alpha_{1D}(\OT) - \alpha_{1N}(\OT), \\
E_i = (\Delta_i)^{e_i}\cdot\alpha_{iD}(\OT,\Delta_1,\ldots,\Delta_{i-1})-\alpha_{iN}(\OT,\Delta_1,\ldots,\Delta_{i-1}), \quad i=2,\ldots,m, \\
G_j = X_j\cdot x_{jD}(\OT,\ODelta) - x_{jN}(\OT,\ODelta), \qquad j=1,\ldots,r, \\
G_Z = Z\cdot \mathrm{lcm}(x_{1D},\ldots,x_{rD}, \alpha_{1D}, \cdots, \alpha_{mD}) - 1.
\end{array}
\end{equation}

Let us denote the zeroset over $\C$ of finitely many polynomials $\{P,Q,\ldots\}$ as $V(P,Q,\ldots)$.

\begin{definition}\label{DEF: aux variety}
$\AP$ is the Zariski closure of the projection of\linebreak $V(E_1,\ldots,E_m,G_1,\ldots,G_r,G_Z)$ onto all but the last coordinate (therefore $\AP\subset\C^{n+m+r}$).
\end{definition}

The objects and maps defined so far are shown in a commutative diagram.
\begin{equation}\label{eq:diagram 1}
\xy
(0,0)*{\xy
	(-5,5)*++{\BP}="BP";
	"BP"+(19,0.5)*{\subset\AP\subset\C^{n+m+r}};
	(-11,-25)*++{\VP}="VP";
	"VP"+(-5,0)*{\supset}; "VP"+(-9.5,0)*++{\C^r};		
	(20,-25)*++{\C^n}="Cn";
	{\ar_{\displaystyle \pi} "BP"; "VP"};
	{\ar_{\displaystyle \varphi} "Cn"; "BP"};
	{\ar_{\displaystyle \mP} "Cn"; "VP"};
\endxy};
(35,0)*{\mbox{defined as}};
(70,0)*{\xy
	(0,5)*+{(\ot,\odelta,\ox)}="BP";
	(-11,-25)*++{\ox}="VP";
	(20,-25)*++{\ot}="Cn";
	{\ar@{|->}_{\displaystyle \pi} "BP"; "VP"};
	{\ar@{|->}_{\displaystyle \varphi} "Cn"; "BP"};
	{\ar@{|->}_{\displaystyle \mP} "Cn"; "VP"};
\endxy};
\endxy
\end{equation}

Two examples will serve here as an illustration.

\begin{example}
Consider the radical parametrization $(t,\sqrt{1-t^2})$ of the unit circle, where the root denotes the principal branch. An expression for this is
\[
\mP = \left\{\left(t,\delta\right), \ \mathbb{T} := [\C(t)\subset\C(t)(\delta), \delta^2=1-t^2], \ \delta(0)=+1 \right\}.
\]	
Then:
\begin{itemize}
	\item $\AP$ is $V(\Delta^2-1+T^2,X-T,Y-\Delta)$ (the $Z$-equation was $Z-1$, trivially eliminated). This is a circle in $\C^4$.
	\item $\BP$ is the closure of the image of $\varphi(t)=(t,\sqrt{1-t^2},t,\sqrt{1-t^2})$ which coincides with $\AP$.
	\item $\VP$ is the projection of $\BP$ onto the variables $X,Y$ which is the circle $X^2+Y^2=1$.
\end{itemize}
\end{example}

\begin{example}\label{EX: superficie}
This example is based on Example 4.8 in \cite{SendraSevilla2013a}. Consider the radical parametrization
$$
\left(t_{{2}},{\frac {t_{{2}}\left(\sqrt {{t_{{1}}}^{10}-4\,{t_{{2}}}^{3}t_{{1}}-
4\,t_{{1}}}-{t_{{1}}}^{5}\right)}{2\,{t_{{2}}}^{3}+2}},t_{{1}}\right)
$$
 of a surface in $\C^3$, where the root denotes the principal branch. An expression for this is
\[
\renewcommand{\arraystretch}{1.2}
\begin{array}{lll}
	\mP & = & \left\{\left( t_2,{\dfrac { \left( -{t_1}^5+\delta \right) t_2}{2\,{t_
2}^3+2}},t_1\right), \right. \\
	& & \ \mathbb{T} := [\C(t_1,t_2)\subset\C(t_1,t_2)(\delta), \delta^2={t_1}^{10}-4\,{t_2}^3t_1-4\,t_1], \\
	& & \left. \delta(1,-1)=+1 \right\}.
\end{array}
\]	
The polynomials in (\ref{EQ: AP unprojected}) are
\[
\begin{array}{lll}
E_1 &= &-{T_1}^{10}+4\,{T_2}^3T_1+\Delta^2+4\,T_1 \\
G_1 &= & X_1-T_2 \\
G_2 &= & X_2 \left(2\,{T_2}^3+2\right) - \left( -{T_1}^5+\Delta \right) T_2 \\
G_3 &=& X_3-T_1 \\
G_Z &=& Z \left( 2\,{T_2}^3+2 \right) - 1.
\end{array}
\]
Then:
\begin{itemize}
	\item Eliminating $Z$ in $\{E_1,G_1,G_2,G_3,G_Z\}$ we obtain $\AP=V(\mathcal{F})$ where
	\begin{equation}\label{EQ:impl-Ex-Superficie}
	\begin{array}{lll}
	\mathcal{F} & = \{ &  X_1-T_2, \\
	& & X_3-T_1, \\
	& & X_2{T_1}^5+X_2\Delta+2\,T_2T_1, \\
	& & {T_1}^{10}-4\,T_1{T_2}^3-\Delta^2-4\,T_1, \\
	& & {X_2}^2{T_2}^3-X_2\Delta\,T_2-T_1{T_2}^2+{X_2}^2, \\
	& & T_2{T_1}^5+2\,X_2{T_2}^3-\Delta\,T_2+2\,X_2 \quad \}
	\end{array}
	\end{equation}
	and we conclude that $\AP$ is irreducible of dimension 2.
	\item Since $\AP$ is irreducible, and using that $\dim(\BP)=\dim(\AP)$ (this fact will be proved later in Theorem \ref{Th:dimension}, but for illustrating purposes we use it here), we have that $\BP=\AP$.
	\item $\VP$ is the projection of $\BP$ onto the variables $X_1,X_2,X_3$ which is the surface defined by the polynomial $X_1{X_3}^5X_2+{X_1}^3{X_2}^2+{X_1}^2X_3+{X_2}^2$.
\end{itemize}
\end{example}

\begin{example}\label{EX: two parabolas}
Let $\mP=(\sqrt[4]{t^2},t)$ where $\sqrt[4]1=+1$. We have for instance the expression
\[
\mP = \left\{\left(\delta,t\right), \ \mathbb{T} := [\C(t)\subset\C(t)(\delta), \delta^4=t^2], \ \delta(1)=+1 \right\}.
\]
Note that the tower is an extension of degree 2.

Then $\AP=V(\Delta^4-T^2,X-\Delta,Y-T)$, which has two irreducible components; one of them is $\BP$, namely $V(\Delta^2-T,X-\Delta,Y-T)$, which is projected on $\VP=V(X^2-Y)$.

We can say more: the whole $\AP$ is projected on $V(X^4-Y^2)$ which is the union of the parabolas $X^2=\pm Y$. The image of $\mP$ is one ``half" of the parabola $X^2=Y$, and $\VP$ is the whole parabola. The ``conjugate parametrizations'' $(\ii^k\sqrt[4]{t^2},t)$, $k=1,2,3$, cover the other three half-parabolas.
\end{example}

As the previous example shows, $\AP$ contains points related to the different conjugates of the radicals, since the defining polynomials $E_i$ (see (\ref{EQ: AP unprojected})) do not discriminate them. In other words, the same variety $\AP$ is obtained if we conjugate any radicals in the parametrization.

In the following theorem we collect some basic facts that will be used throughout the paper.

\begin{theorem}\label{TH:resumen} \
\begin{enumerate}
\item Let  $V_1,V_2$ be distinct irreducible components of an algebraic variety $V\subset \C^n$. Then, the points in $V_1\cap V_2$ are singular points of $V$.
\item Let $V = V (f_1,\ldots, f_r)\subset \C^n$ be an algebraic variety and  $ p\in V$ such that the Jacobian $J_p(f_1,\ldots,f_r)$ has rank $r$. Then $p\in V$ is a nonsingular point of $V$ and lies on a unique irreducible component of $V$ of dimension $n-r.$
\item The dimension of an algebraic variety $V\subset \C^n$ is the largest dimension of a subspace $H\subset \C^n$ for which a projection of $V$ onto $H$ is Zariski dense.
\item Let $V$ be an irreducible quasi-projective variety and $\Pi:V\rightarrow \mathbb{P}^n(\C)$ be a regular map. Then
\[
\dim(V)=\dim(\Pi(V))+\min\{\dim(\Pi^{-1}(q))\,|\, q\in \Pi(V)\}.
\]
\item Let $\mathcal{F}$ be a finite set of complex polynomials in $n$ variables such that $V(\mathcal{F})\neq \emptyset$. Every irreducible component of $V(\mathcal{F})$ has dimension $\geq n-|\mathcal{F}|$.
\end{enumerate}
\end{theorem}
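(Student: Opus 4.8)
The plan is to read Theorem~\ref{TH:resumen} as a compendium of standard facts from algebraic geometry and dimension theory; the ``proof'' is therefore mostly a matter of citing the classical statements and, where a short argument is needed, assembling them in the right logical order. I would first record the dependencies: item (5) (the generalized principal ideal theorem) feeds the proof of item (2), item (1) supplies the uniqueness clause of item (2), and items (3), (4) are independent of the rest; so I would present them in the order (5), (1), (2), (3), (4).

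For item (5) I would invoke the generalized Krull Hauptidealsatz: since $\C[x_1,\dots,x_n]$ is Noetherian of Krull dimension $n$, every prime $\mathfrak{p}$ minimal over an ideal generated by $s=|\mathcal{F}|$ elements has height at most $s$; as $\C[x_1,\dots,x_n]$ is a finitely generated domain over a field it is catenary with $\operatorname{ht}(\mathfrak{p})+\dim\bigl(\C[x_1,\dots,x_n]/\mathfrak{p}\bigr)=n$, so $\dim V(\mathfrak{p})=n-\operatorname{ht}(\mathfrak{p})\ge n-s$; applying this to each minimal prime of $(\mathcal{F})$ proves the statement. For item (1) I would argue locally: if $p\in V_1\cap V_2$ with $V_1\ne V_2$ irreducible components of $V$, then $\mathcal{O}_{V,p}$ has at least two distinct minimal primes (those cut out by $V_1$ and $V_2$), hence is not an integral domain; since a regular local ring is a domain, $p$ must be a singular point of $V$.

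For item (2) the ingredients are: the Zariski tangent space at $p$ equals $\ker J_p(f_1,\dots,f_r)$, of dimension $n-r$ once $J_p$ has rank $r$ (and near such a $p$ the zeroset is a smooth complete intersection, in particular reduced, so this is genuinely the tangent space of $V$); the general inequality $\dim_pV\le\dim T_pV$, with equality exactly at nonsingular points; and item (5), which forces every component of $V$ through $p$ to have dimension $\ge n-r$. Chaining these, $n-r\le\dim_pV\le\dim T_pV=n-r$, so equality holds throughout: $p$ is nonsingular and, by item (1), lies on a unique component, necessarily of dimension $n-r$.

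For item (4) I would cite the fiber-dimension theorem for a dominant morphism of irreducible varieties: restricting $\Pi$ to $V\to\overline{\Pi(V)}$, the generic fiber has dimension $\dim V-\dim\overline{\Pi(V)}$ and every fiber has dimension at least this number, so $\min\{\dim\Pi^{-1}(q)\}=\dim V-\dim\overline{\Pi(V)}$; since the constructible set $\Pi(V)$ satisfies $\dim\Pi(V)=\dim\overline{\Pi(V)}$, rearranging yields the formula. For item (3) I would combine Noether normalization with the inequality $\dim\overline{f(W)}\le\dim W$ valid for any morphism: if $\dim V=d$ then a sufficiently general linear projection $\C^n\to\C^d$ restricts to a dominant (indeed finite, on the top-dimensional part) map on $V$, while no projection onto a subspace of dimension $>d$ can be dense, its closure being a subvariety of dimension $>d$ dominated by $V$. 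I do not foresee a deep obstacle, as all five are textbook results (Shafarevich, \emph{Basic Algebraic Geometry}; Hartshorne, \emph{Algebraic Geometry}; Atiyah--Macdonald or Eisenbud, \emph{Commutative Algebra}); the only point demanding care is that items (2) and (5) are genuinely local and componentwise statements about a possibly reducible, non-reduced $V(\mathcal{F})$ --- exactly as they are phrased --- and that in item (2) the Jacobian criterion is applied to the fixed generators $f_1,\dots,f_r$, the full-rank hypothesis automatically making the zeroset reduced near $p$.
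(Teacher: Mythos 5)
Your proposal is correct, but it does considerably more than the paper, which offers no arguments at all for this theorem: its entire ``proof'' is a list of citations --- items (i)--(iii) to Cox--Little--O'Shea (Chapter 9.5--9.6), item (iv) to Harris (Corollary 11.13), and item (v) to Hartshorne (Proposition I.7.1) and Wang. You instead derive the five facts from commutative-algebraic primitives: the generalized Hauptidealsatz plus catenarity of $\C[x_1,\dots,x_n]$ for (v), the fact that a regular local ring is a domain for (i), the Zariski tangent space inequality $\dim_p V\le\dim T_pV\le\dim\ker J_p$ squeezed against (v) for (ii), the fiber-dimension theorem for (iv), and Noether normalization together with $\dim\overline{f(W)}\le\dim W$ for (iii). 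The logical ordering you impose --- (v) and (i) before (ii) --- is exactly right, and the one delicate point you flag (that the rank condition in (ii) makes the zero set reduced near $p$, so the kernel of the Jacobian really computes the tangent space of $V$) is the place where a careless argument would go wrong; note that even without that remark the weaker inclusion $T_pV\subseteq\ker J_p$ already closes the chain of inequalities. What your route buys is a self-contained justification modulo standard commutative algebra; what the paper's route buys is brevity and precise page references, which is appropriate since all five statements are textbook results used only as tools elsewhere in the article.
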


\begin{proof}
(i), (ii) and (iii) are stated in \cite{CoxLittleOshea2007a}, Chapter 9.6, Theorem 8(iv), page 490; Chapter 9.6, Theorem 9, page 492; and Chapter 9.5, Proposition 5, page 480, respectively. (iv) is stated in \cite{Harris1992a}, Corollary 11.13, page 139. (v) is a direct consequence of Proposition I.7.1. in \cite{Hartshorne1977a}, page 48; see also Proposition 6.1.9 in \cite{Wang2001a}. 
\end{proof}

Next we will prove that $\BP$ is an irreducible component of $\AP$.

\begin{theorem}\label{TH: im(phi) in only one component of BP}
Let $\mP$ be a radical parametrization. Then $\BP$ is contained in a unique irreducible component of $\AP$.
\end{theorem}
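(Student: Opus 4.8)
The plan is to show that $\BP$, being irreducible, lies in a single irreducible component of $\AP$, and the crux is exhibiting the irreducibility of $\BP$ together with the inclusion $\BP\subseteq\AP$. First I would verify $\BP\subseteq\AP$: by construction $\varphi(\ot_0')=(\ot_0',\odelta(\ot_0'),\ox(\ot_0'))$ satisfies, for every $\ot_0'$ in the (dense, open) domain where all the denominators in \eqref{ratfunsexpr} are nonzero, the equations $E_i=0$ (since $\delta_i^{e_i}=\alpha_i$ as elements of $\F_m$, hence the relation $\Delta_i^{e_i}\alpha_{iD}-\alpha_{iN}$ vanishes after substitution) and $G_j=0$ (since $x_j = x_{jN}/x_{jD}$), and we may pick $Z$ to be the reciprocal of the relevant $\mathrm{lcm}$ so that $G_Z=0$. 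Thus $\varphi(\ot)$, for generic $\ot$, is the projection of a point of $V(E_1,\dots,E_m,G_1,\dots,G_r,G_Z)$ onto the first $n+m+r$ coordinates; taking Zariski closures gives $\BP=\overline{\Ima(\varphi)}\subseteq\AP$.

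Next I would show $\BP$ is irreducible. The map $\varphi\colon\C^n\dashrightarrow\C^{n+m+r}$ is defined on a nonempty Zariski-open subset $U$ of $\C^n$ (the complement of the vanishing of the finitely many denominators), and $U$ is irreducible because $\C^n$ is. The image of an irreducible set under a morphism (here the restriction of $\varphi$ to $U$, which is a regular map on $U$) is irreducible, and the Zariski closure of an irreducible set is irreducible. Hence $\BP=\overline{\varphi(U)}$ is irreducible. (Alternatively, one may observe $\C(\BP)$ embeds in $\F_m$, a field, so its coordinate ring is a domain.)

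Finally, combine the two facts. Write $\AP = \bigcup_{k} W_k$ as the union of its irreducible components. Then $\BP = \BP\cap\AP = \bigcup_k (\BP\cap W_k)$ expresses the irreducible set $\BP$ as a finite union of closed subsets, so $\BP\subseteq W_k$ for some $k$; and such a $W_k$ is unique because if $\BP\subseteq W_k\cap W_{k'}$ with $k\neq k'$, then by Theorem~\ref{TH:resumen}(i) every point of $\BP$ would be a singular point of $\AP$, yet $\BP$ contains smooth points of $\AP$ — indeed, at a generic point $p=\varphi(\ot)$ the Jacobian of $\{E_i\}\cup\{G_j\}$ (after eliminating $Z$) with respect to suitable coordinates has the full rank $m+r$, since the $E_i$ can be solved for the $\Delta_i$ one at a time (each $e_i\Delta_i^{e_i-1}\alpha_{iD}\neq 0$ generically) and the $G_j$ solved for the $X_j$ (each $x_{jD}\neq 0$ generically), making the relevant Jacobian block upper-triangular with nonzero diagonal; by Theorem~\ref{TH:resumen}(ii) such $p$ lies on a unique component. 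I expect the mild obstacle to be the bookkeeping of which variables play the role of the "$r$ dependent coordinates" in the rank argument — one must be careful that after eliminating $Z$ the point $p$ still lies in $V(E_1,\dots,E_m,G_1,\dots,G_r)$ with the Jacobian in the $(\ODelta,\OX)$-variables of full rank $m+r$ — but this follows directly from the triangular structure just described. Hence $\BP$ is contained in a unique irreducible component of $\AP$. $\qquad\square$
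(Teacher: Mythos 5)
Your route is genuinely different from the paper's. The paper argues by contradiction with a topological argument: it first shows that every point of $\Ima(\varphi)$ over a suitable dense open subset of $\C^n$ is a nonsingular point lying on a unique component (exactly your triangular Jacobian computation, applied to the variety $V(E_1,\ldots,E_m,G_1,\ldots,G_r,G_Z)$ before projecting out $Z$), and then proves that the complement of a proper algebraic subset of $\C^n$ is path connected, so that two image points in different components could be joined by a path of nonsingular points which would nevertheless have to cross the (singular) intersection of two components. You instead establish the irreducibility of $\BP$ up front and invoke the standard fact that an irreducible closed set contained in a finite union of closed sets lies in one of them; your uniqueness step via Theorem~\ref{TH:resumen}(i)--(ii) coincides in substance with the paper's Step~1. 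Your structure is shorter and dispenses with the path-connectedness lemma entirely.

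There is, however, one genuine flaw in your principal argument for irreducibility: $\varphi|_U$ is \emph{not} a regular map. Its middle block of coordinates consists of branches $\odelta(\ot)$ of radical (algebraic, generally multivalued) functions, not rational functions --- that is the whole point of the construction --- so ``the image of an irreducible variety under a morphism is irreducible'' does not apply; Euclidean continuity of the branches only yields connectedness of the image, not Zariski irreducibility, which is precisely why the paper must combine connectedness with smoothness. Your parenthetical alternative is the correct repair and should be the main argument: $I(\Ima\varphi)$ equals the kernel of the evaluation homomorphism $\C[\OT,\ODelta,\OX]\to\F_m$, $P\mapsto P(\ot,\odelta,\ox)$ (the nontrivial inclusion uses that the realization of $\F_m$ by germs of analytic functions near $\ot_0$ is injective, being a ring homomorphism of a field), and the kernel of a map into a field is prime, so $\BP$ is irreducible. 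Two further small points to tighten: (a) the diagonal entries $e_i\Delta_i^{e_i-1}\alpha_{iD}$ are generically nonzero only after assuming, as the paper does w.l.o.g., that no $\delta_i$ is identically zero; (b) to transfer ``lies on a unique component'' from $V(E_1,\ldots,E_m,G_1,\ldots,G_r)$ to $\AP$, observe that $\AP$ is the closure of $V(E_1,\ldots,E_m,G_1,\ldots,G_r)\menos V(\mathrm{lcm})$ and hence a union of irreducible components of the former variety, so its components through your smooth point $p$ are among those of the larger variety.
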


\begin{proof}
We will prove that $\Ima(\varphi)$ is contained in one component of $\AP$ by contradiction. In short, two points in different components can be connected by a path so that all its points  are regular. But by continuity the path must contain a point that lies in more than one component, thus singular (see  Theorem \ref{TH:resumen} (i)).
We develop the proof below. We assume w.l.o.g. that no $\delta_i$ is $0$; note that if $\delta_i=0$ then $\F_i=\F_{i-1}$ and it can be omitted.

We introduce some notation. Let $\DP=V(E_1,\ldots,E_m,G_1,\ldots,G_r,G_Z)$, and let $D\subset \C^n$ consisting of those $\ot\in \C^n$ such that
\begin{itemize}
\item for some $i=1,\ldots,m$, either $\alpha_{iN}(\ot,\odelta(\ot))=0$ or $\alpha_{iD}(\ot,\odelta(\ot))=0$,
\item for some $i=1,\ldots,r$, \ $x_{iD}(\ot,\odelta(\ot))=0.$
\end{itemize}
Let $\Omega=\C^n\menos D$, and let
\[
\pi_Z\colon \DP \subset \C^{n+m+r+1}\longrightarrow \AP \subset \C^{n+m+r} \colon (\ot,\odelta,\ox,Z) \mapsto (\ot,\odelta,\ox), \]
\[ \varphi_Z=\pi_{Z}^{-1}\circ \varphi: \Omega\subset \C^{n} \longrightarrow
\DP \subset \C^{n+m+r+1}.
  \]
Below, in Step 3, we will see that $\overline{\Omega}=\C^n$. So,  since we consider $\varphi$ restricted to $\Omega$, we can reason with  $\Ima(\varphi)$ instead of $\BP$. We assume, by contradiction, that there exist two different irreducible components $\Gamma_1,\Gamma_2$ of $\AP$ containing $\Ima(\varphi)$. Let $\Sigma_1,\Sigma_2$ be the two irreducible components of $\DP$ that project, via $\pi_Z$, onto $\Gamma_1,\Gamma_2$, respectively; note that $\pi_Z$ is 1:1.

\medskip

\noindent \textbf{Step 1.} Let $\ot_0\in \Omega$. Then  $p:=\varphi_Z(\ot_0)$ is nonsingular in $\DP$, and lies in a unique component (which has dimension $n$).

\medskip

\noindent For the proof of this step we apply Theorem \ref{TH:resumen} (ii) to the point $p$ and the variety $\DP$. For this it suffices to find a nonzero minor of order $m+r+1$ in the Jacobian matrix of $\DP$. Consider the submatrix corresponding to the partial derivatives with respect to all the variables except the $\OT$. It is lower triangular, and the diagonal elements are $\frac{\partial E_i}{\partial \Delta_i}=e_i(\Delta_i)^{e_i-1}\cdot\alpha_{iD}(\OT,\Delta_1,\ldots,\Delta_{i-1})$ and denominators occurring in the definition of the parametrization. Upon evaluation, the latter are nonzero by construction and the former are nonzero by hypothesis. Thus, $p$ is nonsingular and lies in a unique component of $\DP$ of dimension $n$.

\medskip

\noindent \textbf{Step 2.} Let $W\subset \C^n$ be algebraic of dimension $\leq n-1$. Then $\C^n\menos W$ is path connected. We prove it by induction on $n$:
\begin{itemize}
\item $n=1$: $\C$ is homeomorphic to $\mathbb{R}^2$, and $W$ is a finite or empty set.
\item Induction step: Let $W\subset \C^{n+1}$  be algebraic of dimension $\leq n$. Let $p,q\not \in W$, we will prove that there is a path between them disjoint from $W$. Let $\Pi$ be a hyperplane containing $p$ and $q$. Now, $\C^{n+1}\cap\Pi\cong \C^n$ and $W\cap\Pi$ is algebraic of dimension $<n$ since $\Pi\not\subset W$. By induction there is a path between $p$ and $q$ contained in $\Pi$ and disjoint from $W\cap\Pi$, thus also disjoint from $W$.
\end{itemize}

\noindent \textbf{Step 3.}  $D\subset \C^n$ is contained in an algebraic set of dimension $<n$:
\begin{itemize}
\item Let $\beta\in\F_i\menos\{0\}$. Since $\beta$ is algebraic over $\F_0$, let its minimal polynomial be, after clearing denominators,
\[
q_\ell(\ot)Z^\ell+q_{\ell-1}(\ot)Z^{\ell-1}+\cdots+q_0(\ot), \qquad q_j\in \C[\ot][Z], \quad q_0\neq0.
\]
Then, from $\beta\left(q_\ell\beta^{\ell-1}+\cdots\right)=-q_0$ we deduce that if $\beta(\ot)=0$ then $q_0(\ot)=0$. But the zeroset of $q_0$ is algebraic of dimension $<n$.
\item Since $D$ is contained in the union of the zerosets of the numerators and denominators of the $\delta_i^{e_i}\in\F_{i-1}\menos \{0\}$, the claim is proven.
 \end{itemize}

\noindent \textbf{Step 4.} Let $W\subset \C^n$ be algebraic of dimension $\leq n-1$ such that $D\subset W$ (see Step 3). Then, $\Omega':=\C^{n}\menos W$ is path connected (see step 2), it is included in $\Omega$, and it is dense in $\C^{n}$. So we can continue our proof by reasoning on the image of $\varphi$ restricted to $\Omega'$. But first, we remind the reader that we are assuming (by contradiction) that $\Ima(\varphi)\subset \C^{n+m+r}$ is contained in two components  $\Gamma_1$ and $\Gamma_2$ of $\AP$, and that $\Sigma_1$ and $\Sigma_2$ are the components of $\DP$ that are projected, via $\Pi_Z$, over $\Gamma_1$ and $\Gamma_2$ respectively. Now suppose that $\ot_1,\ot_2\in \Omega'$, and that $p=\varphi_Z(\ot_1)\in \Sigma_1$ and $q=\varphi_Z(\ot_2)\in \Sigma_2$ lie in different components of $\DP$ (note that by Step 1 each one lies in only one component):
\begin{itemize}
\item Consider a path between $\ot_1$ and $\ot_2$ in $\Omega'$ and its image $\gamma\subset\DP$ by $\varphi_Z$ ($\gamma$ is connected because $\varphi_Z$ is continuous in the ambient topology). By Step 1 all the points of $\gamma$ are regular. However we prove now that $\gamma$ contains a singular point: let $C_1$ be the Zariski component of $\DP$ containing $p$ and $C_2$ be the union of the other components (so $q\in C_2$). Then $\gamma\cap C_1$ is closed in $\gamma$ because $C_1$ is closed in the ambient topology, and $\gamma\cap C_2$ is closed likewise. Since they cover $\gamma$, which is connected, and both are nonempty, they are not disjoint. But any point in $C_1\cap C_2$ is singular.
\end{itemize}
\end{proof}

The next result shows that we are indeed working with algebraic sets of dimension $n$.

\begin{theorem}\label{Th:dimension} \
\begin{enumerate}
\item Every component of $\AP$ has dimension $n$.
\item $\BP$ is irreducible and has dimension $n$.
\item $\VP$ is irreducible and has dimension $n$.
\end{enumerate}
\end{theorem}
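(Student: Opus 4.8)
The plan is to establish the three parts in the order given, using the structural information already assembled about $\AP$, $\BP$, $\DP$, and the maps $\varphi$, $\pi$. For part (i), I would argue that every component of $\AP$ has dimension $n$ by combining two bounds. The lower bound is immediate: by Theorem~\ref{TH:resumen}(v), every component of $\DP=V(E_1,\ldots,E_m,G_1,\ldots,G_r,G_Z)$ has dimension $\geq (n+m+r+1)-(m+r+1)=n$, and since $\AP$ is obtained from $\DP$ by a projection that is $1$:$1$ (the $Z$-coordinate is determined by the others via $G_Z$), every component of $\AP$ also has dimension $\geq n$. For the upper bound, observe that $\AP$ is the Zariski closure of the image of $\varphi_Z$ composed with $\pi_Z$ on $\Omega$ — more carefully, $\AP$ decomposes into the closure of $\pi_Z(\Ima(\varphi_Z))=\BP$ together with possibly other ``conjugate'' components. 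I would show that any component $\Gamma$ of $\AP$ admits a dominant projection from a component of $V(E_1,\ldots,E_m)$ inside $\C^{n+m}$ (forgetting the $\OX$ variables, which are then determined by the $G_j$ as rational functions of $\OT,\ODelta$ on a dense open set), and that $V(E_1,\ldots,E_m)$ has pure dimension $n$ because each $E_i$ cuts down the dimension by exactly one: $E_i$ is, up to a unit, $\Delta_i^{e_i}-\alpha_i(\OT,\Delta_1,\ldots,\Delta_{i-1})$, which is irreducible-or-not but in any case makes $\Delta_i$ integral of degree $e_i$ over the previous coordinate ring, so $\dim V(E_1,\ldots,E_i)=\dim V(E_1,\ldots,E_{i-1})$ drops by one from the ambient at each stage. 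Hence $\dim\Gamma\leq n$, and with the lower bound, $\dim\Gamma=n$.

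For part (ii), irreducibility of $\BP$ is essentially a restatement of Theorem~\ref{TH: im(phi) in only one component of BP}: $\BP=\overline{\Ima(\varphi)}$ is the closure of the continuous (in the ambient topology) image of the irreducible set $\C^n$ (or of the dense path-connected subset $\Omega'$), hence irreducible; alternatively, $\Ima(\varphi)$ lies in a single component $\Gamma$ of $\AP$, so $\BP\subseteq\Gamma$, and one shows $\BP=\Gamma$ by a dimension count. Concretely, $\varphi$ is injective on $\Omega'$ with left inverse the first projection $(\ot,\odelta,\ox)\mapsto\ot$, so $\dim\BP\geq\dim\C^n=n$; combined with $\BP\subseteq\Gamma$ and $\dim\Gamma=n$ from part (i), and $\Gamma$ irreducible, we get $\BP=\Gamma$, whence $\BP$ is irreducible of dimension $n$.

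For part (iii), $\VP=\overline{\pi(\BP)}$ is the closure of the image of the irreducible variety $\BP$ under the regular (polynomial) map $\pi$, hence irreducible. For the dimension, I would apply Theorem~\ref{TH:resumen}(iv) to $\pi\colon\BP\to\C^r$: $\dim\BP=\dim\overline{\pi(\BP)}+\min_{q}\dim\pi^{-1}(q)$. It then remains to see that the generic fibre of $\pi$ over $\BP$ is finite, i.e. has dimension $0$. This follows because on $\Ima(\varphi)$ the fibre of $\pi$ over a point $\ox$ consists of tuples $(\ot,\odelta,\ox)$ where $\ot$ is constrained by the rank-$n$ Jacobian hypothesis on $\ox(\ot)$ — the condition $\mathrm{rank}\,J=n$ forces the map $\ot\mapsto\ox(\ot)$ to be generically finite onto its image — and $\odelta$ is then algebraic over $\ot$; so a general fibre is finite. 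Hence $\dim\VP=\dim\BP=n$.

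The main obstacle I anticipate is the upper bound $\dim\Gamma\leq n$ in part (i): one must be careful that passing to the $\OX$-variables via the $G_j$ (which are only rational relations, valid off the vanishing of the denominators) does not create extra-dimensional components, and that the ``$Z$-trick'' of $G_Z$ genuinely removes the points where those denominators vanish — this is exactly what $D$ and $\Omega$ were introduced to handle, and what Step~3 of the previous proof controls. The clean way is to note $\AP\cap\{Z\text{-denominator}\neq 0\}$ projects isomorphically onto a locally closed piece of $V(E_1,\ldots,E_m)$ with the $\OX$ read off as regular functions, so its closure has dimension $\leq\dim V(E_1,\ldots,E_m)=n$, and $\AP$ equals the closure of this piece since $G_Z$ was imposed precisely to discard the rest.
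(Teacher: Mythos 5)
Your proposal follows the same overall strategy as the paper's proof, and the arguments you actually carry through are correct; but two of your first‑pass alternatives would fail as stated, and in both cases the repair you need is the one the paper uses. First, the claim that $V(E_1,\ldots,E_m)$ has pure dimension $n$ ``because each $E_i$ cuts the dimension by one'' is false: $E_i=\Delta_i^{e_i}\alpha_{iD}-\alpha_{iN}$ is not monic in $\Delta_i$, so $V(E_1,\ldots,E_i)$ can contain excess components inside $\{\alpha_{iD}=\alpha_{iN}=0\}$ on which $\Delta_i$ is unconstrained. You anticipate this yourself at the end; the paper's route is to work with $\DP=V(E_1,\ldots,E_m,G_1,\ldots,G_r,G_Z)$ directly and observe that $G_Z$ forces all denominators to be nonzero, so every fibre of $\DP\to\C^n$, $(\ot,\odelta,\ox,Z)\mapsto\ot$, is finite; Theorem~\ref{TH:resumen}(iv) then gives $\dim\leq n$ for every component of $\DP$, and finiteness of $\pi_Z$ transports this to $\AP$ (the lower bound via Theorem~\ref{TH:resumen}(v) is identical in both proofs). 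Second, in (ii) your first alternative --- ``$\BP$ is the closure of the continuous image of an irreducible set, hence irreducible'' --- does not work: $\varphi$ is not a morphism (it involves the radicals), and the closure of a connected set in the ambient topology need not be Zariski‑irreducible. Your second alternative ($\Ima(\varphi)$ lies in a single component $\Gamma$ of $\AP$ by Theorem~\ref{TH: im(phi) in only one component of BP}, $\dim\BP\geq n$ because its $\ot$‑projection is dense in $\C^n$, hence $\BP=\Gamma$) is exactly the paper's argument and is the one to keep. For (iii) you differ mildly from the paper: you obtain $\dim\VP=n$ from Theorem~\ref{TH:resumen}(iv) applied to $\pi$, using generic finiteness of its fibres, whereas the paper bounds $\dim\VP$ from below by exhibiting the $n$‑dimensional subset $\overline{\Ima(\mP)}$ via the rank‑$n$ Jacobian hypothesis. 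Both work and both ultimately rest on the Jacobian condition; in your version you should justify generic finiteness of $\pi$ on all of $\BP$, not merely on $\Ima(\varphi)$, e.g.\ by noting that a positive‑dimensional generic fibre would force $\dim\overline{\pi(\BP)}<n$, contradicting the Jacobian condition on $\mP=\pi\circ\varphi$.
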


\begin{proof}
Let $\DP$ be as in the proof of Theorem \ref{TH: im(phi) in only one component of BP}. Consider the projection
\[
\DP\subset \C^{n+m+r+1} \longrightarrow \C^n\colon (\ot,\odelta,\ox,Z) \mapsto \ot.
\]
Every of its fibres is finite, by inspection of the defining system of $\DP$: by the polynomial $G_Z$, all denominators are nonzero, so every polynomial $E_i$ has finitely many solutions in $\Delta_i$, and the same holds for the polynomials $G_j$ and $G_Z$ w.r.t. the variables $X_j$ and $Z$, respectively. Thus by Theorem \ref{TH:resumen} (iv)
every component of $\DP$ has dimension $\leq\dim(\C^n)=n$. On the other hand, by Theorem \ref{TH:resumen} (v),
every component of $\DP$ has dimension $\geq (n+m+r+1)-(m+r+1)=n$. Therefore $\dim(\DP)=n$ and the same is true of its components.
\begin{enumerate}
	\item Let $\pi_Z$ be as in the proof of Theorem \ref{TH: im(phi) in only one component of BP}. Clearly $\pi_Z$ is a finite map. So, using that all the components of $\DP$ have dimension $n$, and Theorem \ref{TH:resumen} (iv)
again, one gets that every component of $\AP$ has dimension $n$.
	\item By Theorem \ref{TH: im(phi) in only one component of BP} and the previous item, we have that $\dim(\BP)\leq n$. On the other hand, let $\Omega$ be as in the proof of Theorem \ref{TH: im(phi) in only one component of BP}, and $\pi^{\times}\colon\C^{n+m+r} \rightarrow \C^n$ such that $\pi^{\times}(\ot,\odelta,\ox)=\ot$. Then, $\Omega\subset \pi^{\times}(\BP)\subset \C^n$. Thus, since $\Omega$ is dense in $\C^n$, it follows that $\pi^{\times}(\BP)$ is also dense in $\C^n$. Therefore, by Theorem \ref{TH:resumen} (iii),
    $\BP$ is one of the irreducible components of $\AP$, and it is of dimension $n$.
    \item Let $W=\overline{\Ima(\mP)}$. There exists a dense subset of the domain of $\mP$ where the rank of the Jacobian of $\mP$ is $n$. Therefore, by Theorem \ref{TH:resumen} (ii), those points are nonsingular in $W$ so $\dim(W)=n$. On the other hand $W\subset\VP$ which is irreducible (because it is the image of the projection from $\BP$). Therefore $W=\VP$.
\end{enumerate}
\end{proof}

\begin{remark}
By inspecting the defining equations of $\DP$ (see the proof of Theorem \ref{TH: im(phi) in only one component of BP}) one can check that each fiber of the map\linebreak $\DP \rightarrow \C^n\colon (\ot,\odelta,\ox,Z)\mapsto \ot$ has cardinality $\leq\prod e_i$. Therefore, this is a bound for the number of components of $\DP$, and hence of $\AP$.
\end{remark}

\begin{remark}
$\VP$ coincides with the intuitive definition of radical variety as $\overline{\Ima(\mP)}$, since the former is irreducible and due to the Jacobian rank condition. Therefore, $\VP$ does not depend on the construction above or the choice of the expression \eqref{ratfunsexpr}.
\end{remark}

\begin{remark} [Implicitization algorithm for $\BP$ and $\VP$] \label{REM: implicit BP and VP}
The following algorithm computes equations of $\BP$ and $\VP$:
\begin{enumerate}
\item Write equations for $\AP$ by eliminating $Z$ from \eqref{EQ: AP unprojected}.
\item Decompose $\AP$ and evaluate $\varphi$ at any parameter value to determine a point inside $\BP$.
\item Eliminate all the variables except the $\OX$ to obtain the equations of $\VP$.
\end{enumerate}
Example \ref{ex:two parabolas second part} shows that there could be a relationship between the conjugations of the radical parametrization and the irreducible components of $\AP$. Nevertheless, we do not address this question in this article.
\end{remark}

\begin{example}\label{ex:two parabolas second part} [Example \ref{EX: two parabolas} continued]
We calculated before that\linebreak $\AP=V(\Delta^4-T^2,X-\Delta,Y-T)$ which decomposes as
\[
\AP=V(\Delta^2-T,X-\Delta,Y-T)\ \cup\ V(\Delta^2+T,X-\Delta,Y-T).
\]
Call the components $W_1$ and $W_2$ respectively.
Since $\varphi(1)=(1,1,1,1)\in W_1\menos W_2$, it follows that $\BP=W_1$. Eliminating in $\BP$ the variables $\Delta,T$ we obtain that $\VP=V(X^2-Y)$.
\end{example}

\section{Rationality and reparametrization of radical varieties}\label{SECTION: rational vs radical}

One of the first questions we can ask is whether a given radical parametrization is actually parametrizing a rational variety. In this section we discuss this question.
For this purpose, we will introduce a variety, called the tower variety, that will provide the necessary information for our goal. In addition, we will also
define the notion of tracing index that will allow us to count algebraically the fibers of a radical parametrization, and that we motivate in the next example.

\begin{example}
Consider the radical curve parametrization $\mP=(t^2,\sqrt{t})$ where $\sqrt{1}=+1$. That is, with the terminology introduced in Section \ref{SECTION: notion of radparam},
\[
	\mP=\left\{(t^2,\delta), \mathbb{T}:=[\C(t)\subset \C(t)(\delta), \delta^2=t ], \delta(1)=1 \right\}.
\]
Then $\varphi(t)=(t,\sqrt{t},t^2,\sqrt{t})$ (see (\ref{eq:diagram 1})). The incidence variety $\BP$ is defined by the equations $\Delta^2=T, X=T^2, Y=\Delta$ and it is irreducible.
The radical variety  $\VP$ is the zeroset of $X=Y^4$.  We study now the fiber of $\pi$ and $\mP$ for different points in $\VP$ (see (\ref{eq:diagram 1})). Consider the two points $p=(16,2)$ and $q=(16,-2)$ in $\VP$:
\begin{itemize}
\item $\pi^{-1}(p)=\{(4,2,p)\}$ and $\pi^{-1}(q)=\{(4,-2,q)\}$. In fact every fiber of $\pi$ has exactly one point, since the second coordinate determines the value of $t$ uniquely, and this gives precisely one point in $\BP$.
\item However, $p$ has a unique preimage by $\mP$, namely $t=4$, but $q$ has no preimage; $\mP^{-1}(p)=\{4\}$, and $\mP^{-1}(q)=\emptyset$.
\end{itemize}
Indeed, the parametrization covers ``half" of $\VP$. Analogously, $\varphi$ covers half of $\BP$ (for example $\pi^{-1}(q)$ is not in the image of $\varphi$).

This example shows how in some sense the fibers of $\pi$, from $\VP$ to $\BP$, behave better than those of $\mP$, from $\VP$ to $\C$.
\end{example}

\begin{definition}
The \emph{tracing index} of a parametrization $\mP$ is the degree of the map $\pi$, that is, the generic cardinal of $\pi^{-1}(p)$ for $p\in\VP$.
\end{definition}

\begin{example}
Consider $\mP=(t^2,\sqrt{t^2+1})$; more precisely, $\mP=\{(t^2,\delta),\mathbb{T}:=[\C(t)\subset \C(t)(\delta), \delta^2=t^2+1], \delta(1)=+\sqrt{2}\}$. Then $\BP=V(\Delta^2-(T^2+1),X-T^2,Y-\Delta)$ and $\VP=V(Y^2-(X+1))$. Every point $(a,b)\in\VP$ with $a\neq0$ has two preimages in $\BP$ by $\pi$, namely
$\pi^{-1}(a,b)=\{(\pm\sqrt{a},b,a,b)\}$. Therefore the tracing index of $\mP$ is two. On the other hand, $(1,\sqrt2)\in\VP$ has as preimages by $\mP$ the values $t=\pm1$, but $(1,-\sqrt2)\in\VP$ has no preimage (the chosen branch produces positive roots of positive real numbers). Indeed, only half of the points of $\VP$ are covered by $\mP$.
\end{example}

In the next example we illustrate how to compute the tracing index.

\begin{example} [Example \ref{EX: superficie} continued] \label{EX: compute tracing index}
We consider the radical surface parametrization $\mP$ of Example~\ref{EX: superficie}. One possibility for computing the tracing index is to proceed probabilistically; that is, we take a random point in $p\in\VP$ and then determine the cardinality of $\pi^{-1}(p)$. For this purpose, we may give values to the parameters in $\mP$. Let us take
\[ p=\mP(4,8)=\left( 8, \dfrac{112}{513}\sqrt{1327}-\dfrac{4096}{513}, 4\right). \]
The fiber of $p$ by $\pi$ is
\[ \pi^{-1}(p)=\left\{
\left( 4, 8, 28\sqrt{1327}, 8, \dfrac{112}{513}\sqrt{1327}-\dfrac{4096}{513}, 4 \right)
\right\}. \]
Therefore, the tracing index of $\mP$ is 1. Alternatively, if one wants to compute deterministically the tracing index, we may repeat the above computation with a generic point in $\VP$. More precisely, let $(a,b,c)\in \C^3$ such that $F(a,b,c)=0$ where $F(X_1,X_2,X_3)$ is the defining polynomial of $\VP$ (see Example~\ref{EX: superficie}). In this situation, we consider the polynomials $$\mathcal{G}:=\{ F(a,b,c),X_1-a,X_2-b,X_3-c\}\cup \mathcal{F}$$ where $\mathcal{F}$ is the set of generators of $\BP$ (see (\ref{EQ:impl-Ex-Superficie})). We compute the reduced Gr\"obner basis of $\mathcal{G}$ with respect to the pure lexicographic order with $a>b>c>X_1>X_2>X_3>t_1>t_2>\Delta$, obtaining
\[\begin{array}{ll}
	\{ & {t_1}^{10}-4\,t_1{t_2}^3-\Delta^2-4\,t_1,\ c-t_1,\ {t_1}^5t_2+2\,b{t_2}^3-\Delta\,t_2+2\,b, \\
	& b{t_1}^5+\Delta\,b+2\,t_2t_1,\ a-t_2,\ X_3-t_1,\ X_2-b,\ X_1-t_2 \quad \}.
\end{array} \]
Then,
\[
	\pi^{-1}(a,b,c)=\left\{ \left((c,a),-\dfrac {c \left( b{c}^{4}+2\,a \right) }{b},(a,b,c)\right)\right\}.
\]
Therefore, the tracing index is 1, and the inverse map is
\[ \begin{array}{cccc}
\pi^{-1}: & \VP & \longrightarrow & \BP \\[1ex]
          & \OX & \longmapsto     &  \left((X_3,X_1),-\dfrac {X_3 \left( X_2{X_3}^{4}+2\,X_1 \right) }{X_2},\OX \right).
\end{array} \]
\end{example}

We introduce now our next concept. For this purpose, we define the map
\begin{equation}\label{EQ:psi}
\psi\colon\C^n\to\C^{n+m}\colon\ot\mapsto(\ot,\odelta).
\end{equation}
\begin{definition}
We define the \emph{tower variety} of $\mP$ as $\VT=\overline{\Ima(\psi)}$, and the rational maps
\[
\mR\colon\VT\to\VP\colon(\ot,\odelta)\mapsto\ox \quad\mbox{and}\quad \pi^*\colon\BP\to\VT\colon(\ot,\odelta,\ox)\mapsto(\ot,\odelta).
\]
\end{definition}

Therefore $\mR$ is a rational lift of the nonrational $\mP$. The next diagram shows the previous and new definitions.
\begin{equation}\label{eq:diagrama2}
\hspace*{-3mm}
\xy
(0,0)*{\xy
	(-5,5)*++{\BP}="BP";
	"BP"+(19,0.5)*{\subset\AP\subset\C^{n+m+r}};
	(-11,-25)*++{\VP}="VP";
	"VP"+(-5,0)*{\supset}; "VP"+(-9.5,0)*++{\C^r};		
	(20,-25)*++{\C^n}="Cn";
	{\ar_{\displaystyle \pi} "BP"; "VP"};
	{\ar^(0.6){\displaystyle \varphi} "Cn"; "BP"};
	{\ar_(0.4){\displaystyle \mP} "Cn"; "VP"};
	(22,-10)*++{\VT}="VT";
	"VT"+(5,0)*{\subset}; "VT"+(12,0.5)*++{\C^{n+m}};
	{\ar_{\displaystyle \psi} "Cn"; "VT"};
	{\ar_(0.6){\displaystyle \mR} "VT"; "VP"};
	{\ar^{\displaystyle \pi^*} "BP"; "VT"};
\endxy};
(35,-8)*{\mbox{defined as}};
(70,0)*{\xy
	(-5,5)*+{(\ot,\odelta,\ox)}="BP";
	(-11,-25)*++{\ox}="VP";
	(20,-25)*++{\ot}="Cn";
	{\ar@{|->}_{\displaystyle \pi} "BP"; "VP"};
	{\ar@{|->}^(0.6){\displaystyle \varphi} "Cn"; "BP"};
	{\ar@{|->}_(0.4){\displaystyle \mP} "Cn"; "VP"};
	(22,-10)*+{(\ot,\odelta)}="VT";
	{\ar@{|->}_{\displaystyle \psi} "Cn"; "VT"};
	{\ar@{|->}_(0.6){\displaystyle \mR} "VT"; "VP"};
	{\ar@{|->}^{\displaystyle \pi^*} "BP"; "VT"};
\endxy};
\endxy
\end{equation}

The next theorem states the main properties of the tower variety.

\begin{theorem}\label{th:VT}
$\VT$ is irreducible and has dimension $n$. Furthermore, $\VT=\overline{\pi^{*}(\BP)}$.
\end{theorem}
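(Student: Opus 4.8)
The plan is to realise both $\psi$ and $\pi^{*}$ as restrictions of one and the same linear projection, and then to read off all three assertions from what is already known about $\BP$. Write $\rho\colon\C^{n+m+r}\to\C^{n+m}$ for the (polynomial, hence Zariski‑continuous) projection $(\ot,\odelta,\ox)\mapsto(\ot,\odelta)$. By the definitions collected in diagram~\eqref{eq:diagrama2}, $\pi^{*}$ is the restriction of $\rho$ to $\BP$, while $\psi=\rho\circ\varphi$ on the domain where $\varphi$ is defined, so that $\Ima(\psi)=\rho(\Ima(\varphi))$. As in the proof of Theorem~\ref{TH: im(phi) in only one component of BP}, one actually works on a dense open subset of $\C^{n}$ on which the branches $\odelta$, and the rational expressions of the $x_{j}$ in terms of $(\ot,\odelta)$, are defined; because that subset is dense and the maps involved are continuous in the ambient topology, passing to it does not affect any Zariski closure taken below.

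First I would prove the identity $\VT=\overline{\pi^{*}(\BP)}$. From $\Ima(\psi)=\rho(\Ima(\varphi))\subseteq\rho(\BP)=\pi^{*}(\BP)$ we obtain, after taking Zariski closures, $\VT\subseteq\overline{\pi^{*}(\BP)}$. Conversely, the continuity of $\rho$ gives $\pi^{*}(\BP)=\rho(\overline{\Ima(\varphi)})\subseteq\overline{\rho(\Ima(\varphi))}=\overline{\Ima(\psi)}=\VT$, whence $\overline{\pi^{*}(\BP)}\subseteq\VT$. This settles the second sentence of the theorem.

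For irreducibility, I would invoke Theorem~\ref{Th:dimension}(ii), by which $\BP$ is irreducible; since $\pi^{*}$ is a morphism, $\pi^{*}(\BP)$ is the continuous image of an irreducible set, hence irreducible, and therefore so is its closure $\VT$. For the dimension, the upper bound is immediate: $\VT=\overline{\pi^{*}(\BP)}$ is the closure of the image of $\BP$ under a morphism, so $\dim(\VT)\le\dim(\BP)=n$, again by Theorem~\ref{Th:dimension}(ii). For the lower bound I would use Theorem~\ref{TH:resumen}(iii): the projection of $\VT\subseteq\C^{n+m}$ onto the coordinate plane $\C^{n}$ of the first $n$ coordinates has image containing $\{\ot\in\C^{n}:\odelta(\ot)\text{ is defined}\}$, whose complement lies in a proper algebraic subset of $\C^{n}$ (this is Step~3 in the proof of Theorem~\ref{TH: im(phi) in only one component of BP}); hence that projection is Zariski dense and $\dim(\VT)\ge n$. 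Combining the two bounds gives $\dim(\VT)=n$.

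I do not expect a genuine obstacle here: once $\pi^{*}$ and $\psi$ are identified with a linear projection, the statement is a formal consequence of ``$\BP$ is irreducible of dimension $n$'' (Theorem~\ref{Th:dimension}) together with the elementary facts about morphisms and projections gathered in Theorem~\ref{TH:resumen}. The only point needing care is, as usual, that $\varphi$ and $\psi$ are not polynomial maps, so one must restrict to a dense set of parameter values before taking Zariski closures; but this bookkeeping has already been carried out in the earlier proofs. As a side remark, the same reasoning shows that the generic fibre of $\pi^{*}$ is a single point, since on $\BP\subseteq\AP$ the equations $G_{j}$ determine each $X_{j}$ rationally from $(\ot,\odelta)$ wherever the denominators $x_{jD}$ do not vanish; this gives an alternative route to $\dim(\VT)=n$ through Theorem~\ref{TH:resumen}(iv).
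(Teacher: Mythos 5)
Your proof is correct, but it reaches the identity $\VT=\overline{\pi^{*}(\BP)}$ by a different and somewhat more economical route than the paper. The paper only proves the inclusion $\VT\subseteq\overline{\pi^{*}(\BP)}$ directly (via density of $\psi(\Dom(\varphi))$ and commutativity of the diagram); for the reverse inclusion it first computes $\dim(\VT)=n$ by a Jacobian argument applied to $\psi$, shows $\dim(\pi^{*}(\BP))=n$ via finiteness of the fibers and Theorem~\ref{TH:resumen}(iv), and then concludes equality because a closed $n$-dimensional subvariety of the irreducible $n$-dimensional variety $\overline{\pi^{*}(\BP)}$ must be the whole variety. You instead obtain the reverse inclusion purely topologically, from $\rho(\overline{A})\subseteq\overline{\rho(A)}$ for the Zariski-continuous projection $\rho$, so the set-theoretic identity comes for free before any dimension count; irreducibility then follows exactly as in the paper, and the dimension is pinned down by the dense-projection criterion of Theorem~\ref{TH:resumen}(iii) together with Step~3 of the proof of Theorem~\ref{TH: im(phi) in only one component of BP}. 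What your approach buys is a cleaner logical order (equality first, then irreducibility and dimension as corollaries) and no reliance on the fiber-dimension theorem for this statement; what the paper's approach buys is that it simultaneously records the finiteness of $\pi^{*}$, which is reused in Theorem~\ref{theorem-rational}. Your closing side remark that the generic fiber of $\pi^{*}$ is a single point (the $G_{j}$ determine $\OX$ rationally from $(\ot,\odelta)$) is also correct and recovers that finiteness. The only bookkeeping point, which you do address, is that $\varphi$ and $\psi$ must be restricted to a common dense set of parameters before taking closures.
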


\begin{proof}
First, we have that $\dim(\VT)=n$ by applying to $\psi$ a similar argument to that of Theorem \ref{Th:dimension} (iii).

Second, by Theorem \ref{Th:dimension}, $\dim(\BP)=n$. So $\pi^*(\BP)$ has dimension $n$, because its fibers are generically 0-dimensional (see Theorem \ref{TH:resumen} (iv)).
In order to show that $\VT\subset\overline{\pi^*(\BP)}$, it suffices to prove the inclusion of the dense subset $\psi(\Dom(\varphi))$, which is true by the commutativity of the corresponding diagram. Therefore $n\geq\dim(\VT)\geq n$.

Finally, since $\BP$ is irreducible (see Theorem~\ref{Th:dimension}) and $\pi^*$ is finite, we have that $\pi^*(\BP)$ is irreducible. Therefore, since $\VT\subset \overline{\pi^*(\BP)}$, and $\dim(\VT)=n=\dim(\pi^{*}(\BP))$, we have that $\VT=\overline{\pi^{*}(\BP)}$ and hence it is irreducible.
\end{proof}

Theorem \ref{th:VT} provides an algorithm to determine the tower variety.

\begin{remark} [Implicitization algorithm for $\VT$] \label{REM: implicit VT}
The following algorithm computes equations of $\VT$:
\begin{enumerate}
\item Compute  defining polynomials of $\BP$ (see Remark \ref{REM: implicit BP and VP}).
\item Eliminate the variables $\OX$ to obtain the equations of $\VT$.
\end{enumerate}
\end{remark}

\begin{example} [Example \ref{EX: compute tracing index} continued] \label{EX: compute tower}
We consider the radical parametrization $\mP$ introduced in Example \ref{EX: superficie}. There we computed generators of $\BP$, namely the set $\mathcal{F}$. In this situation, the tower variety can be determined by eliminating in $\mathcal{F}$ the variables $\OX$. We obtain $\VT=V(-T_{1}^{10}+4T_1 T_{2}^{3}+\Delta^2+4T_1) \subset \C^3$.
\end{example}

The importance of $\VT$ and $\mR$ resides in the fact that they encode rationally the information of the radical parametrization. For example, for curves, the mere existence of $\mR$ implies that $\genus(\VP)\leq\genus(\VT)$; in particular, if $\VT$ is rational, then $\VP$ is rational. In other words:

\begin{theorem}
Let $n=1$. Given a tower of fields such that $\VT$ is rational, any radical parametrization from that tower will give rise to a rational curve.
\end{theorem}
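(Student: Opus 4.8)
The plan is to use the rational dominant map $\mR\colon\VT\to\VP$ from diagram~\eqref{eq:diagrama2} together with the classical fact that a dominant map of curves cannot increase the genus. First I would collect what the earlier results give: since $n=1$, both $\VT$ and $\VP$ are irreducible curves, by Theorem~\ref{th:VT} and Theorem~\ref{Th:dimension} (iii). Next I would check that $\mR$ is dominant. By the commutativity of diagram~\eqref{eq:diagrama2} one has $\mR\circ\psi=\mP$ on the common domain, so $\mR$ sends a dense subset of $\VT$ onto $\Ima(\mP)$, which is dense in $\VP=\overline{\Ima(\mP)}$ (see the proof of Theorem~\ref{Th:dimension} (iii)); hence $\mR$ is dominant. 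Consequently $\mR^{*}$ embeds the function field $\C(\VP)$ into $\C(\VT)$, and since $\dim(\VP)=1$ this is a subfield of transcendence degree $1$ over $\C$, i.e. $\C\subsetneq\C(\VP)\subseteq\C(\VT)$.

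Then I would finish in either of two equivalent ways. The shortest is Lüroth's theorem: the hypothesis that $\VT$ is rational means $\C(\VT)\cong\C(u)$ for a transcendental $u$, so $\C(\VP)$ is an intermediate field $\C\subsetneq\C(\VP)\subseteq\C(u)$, and Lüroth's theorem forces $\C(\VP)=\C(v)$ for some $v$; hence $\VP$ is a rational curve. Alternatively, matching the genus formulation used in the paragraph preceding the statement, I would pass to the nonsingular projective models $\widetilde{\VT}$, $\widetilde{\VP}$, observe that $\mR$ induces a finite surjective morphism $f\colon\widetilde{\VT}\to\widetilde{\VP}$, and apply Riemann--Hurwitz: $2\genus(\VT)-2=\deg(f)\bigl(2\genus(\VP)-2\bigr)+\rho$ with ramification contribution $\rho\geq 0$ (characteristic $0$), whence $2\genus(\VT)-2\geq\deg(f)\bigl(2\genus(\VP)-2\bigr)$; substituting $\genus(\VT)=0$ and $\deg(f)\geq 1$ gives $2\genus(\VP)-2\leq -2$, so $\genus(\VP)=0$ and $\VP$ is rational. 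Either route in fact proves the stronger statement $\genus(\VP)\le\genus(\VT)$ announced before the theorem.

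There is no real obstacle here, only a couple of points of care. One is that $\mR$ is a priori only a rational map, so one must argue on a dense open set where $\mR\circ\psi=\mP$ holds and pass to the smooth projective models when invoking Riemann--Hurwitz (or simply work at the level of function fields for the Lüroth argument, where this is automatic). The other is that ``$\VT$ rational'' should be read as $\genus(\VT)=0$, which is legitimate since the genus is a birational invariant. Beyond Theorems~\ref{Th:dimension} and~\ref{th:VT}, the proof uses only standard curve theory.
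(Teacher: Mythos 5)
Your proposal is correct and follows exactly the route the paper intends: the theorem is stated as a reformulation of the preceding remark that the mere existence of the dominant rational map $\mR\colon\VT\to\VP$ forces $\genus(\VP)\leq\genus(\VT)$, and the paper supplies no further proof. You simply fill in the standard details (dominance of $\mR$ via $\mR\circ\psi=\mP$, then L\"uroth or Riemann--Hurwitz), which is a faithful and complete elaboration of the same argument.
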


There is no general bound on the discrepancy between $\genus(\VT)$ and $\genus(\VP)$, as the next example shows.
\begin{example}
The parametrization $\mP(t)=\left(\sqrt[n]{1-t^n},\sqrt[n]{1-t^n}\right)$ is $n:1$ and $\VP$ is the line $X_1=X_2$, clearly a genus 0 curve. But for the tower $\C(t)\subset \C(t)(\sqrt[n]{1-t^n})$ we have $\VT=V(T^n+\Delta^n-1)$, a Fermat curve with genus $\frac{(n-1)(n-2)}{2}$.
\end{example}

For general dimension, if $\VT$ is unirational then $\VP$ is unirational. Similar conditions on the plurigenera may exist in higher dimension.

\begin{theorem}\label{theorem-rational}
$ $
\begin{enumerate}
\item If the tracing index of $\mP$ is 1, then $\VP$ and $\VT$ are birationally equivalent. Therefore, $\VP$ is rational if and only $\VT$ is rational.
\item If $\VT$ is unirational, then $\VP$ is unirational. Furthermore, if
$\mathcal{T}(\ot)$ is a unirational parametrization of $\VT$ then $\mR(\mathcal{T})(\ot)=\mP(\pi_\mathbb{T}\circ\mathcal{T}(\ot))$ is a unirational parametrization of $\VP$ where $\pi_\mathbb{T}(\ot,\overline{\delta})=(\ot)$.
\end{enumerate}
\end{theorem}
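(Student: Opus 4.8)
The plan is to leverage the commutative diagram \eqref{eq:diagrama2} together with the dimension and irreducibility facts already established, so that both items reduce to standard statements about dominant maps between irreducible varieties of the same dimension.

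For item (i), the key observation is that $\mP = \mR\circ\psi$ and, more to the point, that $\pi^*\colon\BP\to\VT$ and $\pi\colon\BP\to\VP$ fit together with $\mR$ so that $\mR\circ\pi^* = \pi$ on $\BP$. First I would argue that $\pi^*$ is birational: by Theorem~\ref{th:VT}, $\VT=\overline{\pi^*(\BP)}$ and $\pi^*$ is finite (hence generically finite) between irreducible varieties of the same dimension $n$ (Theorem~\ref{Th:dimension}); moreover $\pi^*$ is generically injective because a point $(\ot,\odelta)\in\VT$ together with the parametrization expressions \eqref{ratfunsexpr} determines $\ox$ uniquely (the $x_{jN}/x_{jD}$ evaluated at $(\ot,\odelta)$), so the fiber $(\pi^*)^{-1}(\ot,\odelta)$ is a single point wherever the denominators do not vanish. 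Hence $\pi^*$ is birational and $\pi^{*-1}$ is a rational section. Now if the tracing index is $1$, then $\pi\colon\BP\to\VP$ is birational by definition. Composing, $\mR = \pi\circ(\pi^*)^{-1}$ (as rational maps $\VT\dashrightarrow\VP$) is a composition of two birational maps, hence birational. Consequently $\VP$ and $\VT$ are birationally equivalent, and since rationality is a birational invariant, $\VP$ is rational iff $\VT$ is rational.

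For item (ii), suppose $\mathcal{T}(\ot)$ is a unirational parametrization of $\VT$, i.e.\ a dominant rational map $\C^n\dashrightarrow\VT$. Then $\pi_\mathbb{T}\circ\mathcal{T}\colon\C^n\dashrightarrow\C^n$ followed by $\mP$ makes sense because $\psi = (\mathrm{id},\odelta)$ recovers $\mathcal{T}$ up to the relation $\VT=\overline{\Ima(\psi)}$; concretely, since $\mathcal{T}(\ot)\in\VT$ we may write $\mathcal{T}(\ot)=(\pi_\mathbb{T}(\mathcal{T}(\ot)),\ast)$ and the defining equations $E_i$ of $\VT$ force the second block to be an allowed branch value of $\odelta$ at the point $\pi_\mathbb{T}(\mathcal{T}(\ot))$, so $\mR\circ\mathcal{T}(\ot)$ agrees with $\mP(\pi_\mathbb{T}\circ\mathcal{T}(\ot))$ as rational functions. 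It remains to check dominance onto $\VP$: since $\mathcal{T}$ is dominant onto $\VT$ and $\mR\colon\VT\dashrightarrow\VP$ is dominant (its image is dense in $\VP$ by the definition of $\VP=\overline{\pi(\BP)}$ and the factorization $\pi=\mR\circ\pi^*$ with $\pi^*$ dominant onto $\VT$), the composition $\mR\circ\mathcal{T}$ is dominant onto $\VP$. Hence $\mR(\mathcal{T})(\ot)$ is a unirational parametrization of $\VP$, and in particular $\VP$ is unirational.

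The main obstacle is the bookkeeping around basepoints and domains of definition: $\mR$ is only a rational map, so I must make sure that the generic point of $\mathcal{T}$'s image avoids the indeterminacy locus of $\mR$ (equivalently, that the relevant denominators $x_{jD}$ do not vanish identically along $\Ima(\mathcal{T})$), and that $\pi^*$'s rational inverse is composed on a dense open set where everything is defined. This is handled by the same density/genericity arguments used repeatedly in the proofs of Theorems~\ref{TH: im(phi) in only one component of BP} and~\ref{Th:dimension} (the set $D$ where denominators vanish is contained in a proper subvariety), so no new ideas are needed, only care to restrict to suitable dense open subsets before composing.
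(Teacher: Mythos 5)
Your proposal is correct and follows essentially the same route as the paper: item (i) via the factorization $\mR=\pi\circ(\pi^*)^{-1}$ with both factors generically injective (hence birational), and item (ii) via the identity $\mP\circ\pi_\mathbb{T}=\mR$ composed with the dominant map $\mathcal{T}$. You simply supply more detail than the paper does, in particular the explicit reason why $\pi^*$ is generically injective and the genericity bookkeeping for composing the rational maps.
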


\begin{proof}
$ $
\begin{enumerate}
	\item Since $\mR=\pi\circ(\pi^*)^{-1}$ over irreducible dense sets, and both are injective, $\mR$ is rational and injective, thus birational (see also \cite{Schicho1998a} for a constructive proof).
	\item Clear since $\mP\circ\pi_\mathbb{T}=\mR$.
	\end{enumerate}
\end{proof}

The last part of the theorem gives rise to an algorithm for rational reparametrization. Assume that an algorithm that decides the existence of rational parametrizations and computes them is available; see for instance \cite{SendraWinklerPerezdiaz2008a} for curves and \cite{Schicho1998b} for surfaces. Call this algorithm \textsf{RatParamAlg}. Then, the last two theorems provide a reparametrization algorithm.

\begin{algorithm}[Reparametrization Algorithm]
\renewcommand{\algorithmicrequire}{\textbf{Input:}\ }
\renewcommand{\algorithmicensure}{\textbf{Output:}\ }
\begin{algorithmic}[1]
$ $\vspace{1ex}

\algorithmicrequire A radical parametrization $\mP$ given as in Def \ref{DEF: rad param}.

\algorithmicensure One of the following:
\begin{itemize}
 \item a reparametrization of $\mP$ that makes it rational
 \item ``$\VP$ cannot be parametrized rationally''
 \item ``No answer''.
\end{itemize}
\STATE Compute $\VT$ and apply \textsf{RatParamAlg} to it.
\IF {$\VT$ has a rational parametrization $\mT(\ot)$}
	\RETURN $\pi_\mathbb{T}(\mT(\ot))$ with $\pi_\mathbb{T}(\OT,\overline{\Delta})=(\OT)$
\ELSE
	\STATE compute the tracing index $m$ of $\mP$
	\IF {$m=1$}
		\RETURN ``$\VP$ cannot be parametrized rationally''
	\ELSE
		\RETURN ``No answer''
	\ENDIF
\ENDIF
\end{algorithmic}
\end{algorithm}

\begin{remark}
When the algorithm outputs ``No answer"  it means that it was unable to find out whether $\mP$ can be reparametrized into a rational parametrization. This is due to the fact that the tracing index is not 1. Nevertheless, this can be solved by applying \textsf{RatParamAlg} to the radical variety $\VP$, but the idea of the algorithm is to provide the answer from the tower variety.
\end{remark}


The following examples illustrate the algorithm.

\begin{example} \label{EX: reparam circle cubicroot}
	Let $\mP=(\sqrt[3]{t},\sqrt{1-\sqrt[3]{t^2}})$, that is,
	\[
	\begin{array}{ll}\mP:= & \{ (\delta_1,\delta_2), \mathbb{T}:=[ \C(t)\subset \C(t)(\delta_1)\subset \C(t,\delta_1)(\delta_2), \delta_1^3=t, \delta_2^2=1-\delta_1^2 ], \\
	& \delta_1(1)=1, \delta_2(1)=0.  \}
	\end{array} \]
	We have $\VT=V(\Delta_1^3-T,\Delta_2^2-(1-\Delta_1^2))$, that can be parametrized by $$\mT(t)=\left( \left(\frac{2t}{1+t^2}\right)^3,\frac{2t}{1+t^2},\frac{1-t^2}{1+t^2} \right).$$ Then we obtain the rational reparametrization $$\mP\left(\left(\frac{2t}{1+t^2}\right)^3\right) = \left(   \frac{2t}{1+t^2},\frac{1-t^2}{1+t^2} \right).$$
\end{example}

\begin{example} \label{EX: reparam superficie}
We consider the surface defined by $x^3-x^2z-xz^2+z^3-8y^2=0$. Let us say that we want to parametrize it, if possible, rationally. One standard way is to deduce it from the computation of the arithmetic genus and the plurigenus of the surface. Instead, since the degree w.r.t. $y$ of the defining polynomial is 2, we may first compute a radical parametrization (see \cite{SendraSevilla2013a} for further details). We obtain, for instance,
\[ \renewcommand{\arraystretch}{1.5} \begin{array}{l}
	\mP=\left( t_1, \frac{1}{4}\sqrt{2t_1+2 t_2}\, (-t_2+t_1), t_2\right) \\
	=\left\{ \left(t_1, \frac{1}{4} \delta (t_1-t_2), t_2\right), \mathbb{T}:=[\C(\ot)\subset \C(\ot)(\delta),
\delta^2=2(t_1+t_2) ], \delta(1,1)=2   \right\}.
\end{array}  \]
The tower variety is
\[ \VT=V(\Delta^2-2T_1-2T_2) \]
which is clearly rational and parametrizable as
\[ \mathcal{T}(h_1,h_2)=\left( \frac{1}{2} h_{1}^{2}-h_2, h_2, h_1\right). \]
Now, we can reparametrize:
\[ \mP\left( \frac{1}{2} h_{1}^{2}-h_2, h_2\right)=\left(\frac{1}{2}h_{1}^{2}-h_{2}, \frac{1}{4}h_1 \left(-2h_2+\frac{1}{2}h_{1}^{2}\right), h_2\right)\]
which is a rational parametrization of the given surface.
\end{example}

Even if we cannot reparametrize a radical variety rationally, sometimes we may be able to simplify the radical parametrization using the same technique.

\begin{example} \label{EX: reparam simplificacion}
	Suppose $\VT=V(\Delta_1^2-T, \Delta_2^3-(12-T))$. This is a genus 1 curve, thus it admits a $2:1$ rational map to $\C$. Such map can be inverted by means of a square root, that is, there exists $\mT_2\colon\C\rightarrow\VT$ radical and defined in an extension of $\C(t)$ of degree 2:
	\[ \mathcal{T}_2(t)=(-t^3+12, \sqrt{-t^3+12}, t). \]
	Now, given any $\mP$ with the tower $\C(t)\subset \C(\sqrt{t})\subset \C(\sqrt{t},\sqrt[3]{12-t^2})$, we have that $\mR\circ\mT_2$ is a parametrization of $\VP$ defined in the same degree 2 extension as $\mT_2$. In other words, $\pi_\mathbb{T}\circ\mT_2$ is a reparametrization that simplifies $\mP$.
\end{example}

\subsection{Parametrizing surfaces}
Example \ref{EX: reparam superficie} motivates the fact that the tower variety can be used to reparametrize rationally surfaces that are parametrized by radicals. The feasibility of this approach depends on whether the tower variety is easier to parametrize rationally than the radical variety. Although we do not have a complete answer to this question, we illustrate how these ideas can be applied to certain families of surfaces for which one does not need to apply the general surface parametrization algorithm but simpler instances of it.

\bigskip

\noindent \textit{Tubular Tower Surfaces.}
We consider irreducible surfaces of the form
\[ F(x,y,z):=A(x,y)^2 z^2 - B(x,y)^2 C(x,y) \]
where $A,B\in \C[x,y]$ and $C\in \C[x,y]$ with degrees at most two w.r.t. one of the variables, say $y$.
The surface can be parametrized by radicals
using the tower
\[ \mathbb{T}:=[\C(\ot)\subset\C(\ot)(\delta),\,\,\text{where}\,\,\delta^2=C(\ot)]  \]
as
\[ \mP(\ot)=\left(t_1,t_2, \dfrac{B(\ot)\delta(\ot)}{A(\ot)}\right). \]
Then, the tower variety is
\[ \VT=V(\Delta^2-C(\OT))\]
which is a tubular surface. Therefore, $\VP$ is rational. Moreover, the tubular surface parametrization algorithm applied to $\VT$ produces the change of parameters to be performed in $\mP(\ot)$ in order to get a rational parametrization of $\VP$. We observe that if the total degree of $C$ is $\leq 2$, the tower variety is indeed a quadric; this is the case of Example \ref{EX: reparam superficie}.

Using an analogous reasoning, the previous family can be extended to
\[ F(x,y,z):=A(x,y)^2 z^2 + A(x,y)^2B(x,y)^2D(x,y)z+ B^2(x,y) C(x,y)\]
where  $A,B,C,D\in \C[x,y]$ and the degree of $D^2-4C$ is at most two w.r.t. one of the variables.

\bigskip

\noindent \textit{Monoidal Tower Surfaces.}
We consider irreducible surfaces of the form
\[ F(x,y,z):=A(x,y)^n z^n - B(x,y)^n C_{n-1}(x,y) \]
where $A,B\in \C[x,y]$ and $C_{n-1}\in \C[x,y]$ is a homogenous form of degree $n-1$.
The surface can be parametrized by radicals
using the tower
\[ \mathbb{T}:=[\C(\ot)\subset\C(\ot)(\delta),\,\,\text{where}\,\,\delta^n=C_{n-1}(\ot)]  \]
as
\[ \mP(\ot)=\left(t_1,t_2, \dfrac{B(\ot)\delta(\ot)}{A(\ot)}\right). \]
Then, the tower variety is
\[ \VT=V(\Delta^n-C_{n-1}(\OT))\]
which has an $(n-1)$-fold point at the origin. Therefore, $\VP$ is rational. Moreover, parametrizing $\VT$ by lines we obtain the change of parameters to be performed in $\mP(\ot)$ in order to get a rational parametrization of $\VP$.

\begin{example}
We consider the surface defined by
\[ F(x,y,z)=-x^{14}y^5+y^8z^4+4xy^6z^4+6x^2y^4z^4+4x^3y^2z^4+x^4z^4\]
which can be expressed as
\[ F(x,y,z)=(y^2+x)^4z^4-(x^3y)^4x^2y \]
and hence it is of monoidal tower type. We consider the tower
\[ \mathbb{T}:=[\C(\ot)\subset\C(\ot)(\delta),\,\,\text{where}\,\,\delta^4=t_{1}^2t_{2}]  \]
that provides the radical parametrization
\[ \mP(\ot)=\left(t_1,t_2, t_{1}^{3} \,t_2\, \dfrac{\sqrt[4]{t_{1}^{2}t_{2}}}{t_{2}^{2}+t_1}\right).
\]
The tower variety is
\[ \VT=V(\Delta^4-T_{1}^2 T_2) \]
that can be parametrized by lines as
\[ \mT=(t_{1}^2, t_{2}^4, t_1t_2). \]
Therefore $\mP(t_{1}^2,t_{2}^4)$ is a rational parametrization of the surface, namely
\[ \left(t_{1}^2, t_{2}^4, \dfrac{t_{1}^7 t_{2}^5}{t_{2}^8+t_{1}^2} \right).\]
\end{example}

\subsection{Radical Integrals}

Finally, rational reparametrization can be applied to the computation of certain radical integrals. More precisely, let us consider the integral
\[ \int f(t)\, dt \]
where $f$ is an element of a radical tower $\mathbb{T}$ over $\C(t)$. We have the tower variety $\VT=\overline{\Ima(\psi)}$ associated to $\mathbb{T}$ (see (\ref{EQ:psi})). We assume that $\VT$ is rational, and let $\mT(t)=(\mT_1(t),\ldots)$ be a birational parametrization of the curve $\VT$. Then, by Theorem \ref{theorem-rational}, $f(\pi_{\mathbb{T}} \circ \mT)\in \C(t)$. We consider the new  parametrization $\mT^*(t):=\psi(\mT_1(t))$  of $\VT$ that, by Theorem \ref{theorem-rational}, is rational. Furthermore, since the first coordinate of both parametrizations $\mT$ and $\mT^*$ are equal, and since $\mT$ is proper, then $\mT^*$ is also birational.  Moreover, since $\xi(t):=\pi_{\mathbb{T}} \circ \mT^*\in \C(t)$ then
\[ \int f(\pi_{\mathbb{T}}(\mT^*(u)))\, \frac{\partial \xi(u)}{\partial u}\, du \]
is a rational integral. Furthermore, since $\mT^*$ is birational, the change of parameter in the integral can be undone as
\[ u= (\mT^*)^{-1}(\psi(t)). \]
We illustrate this application with a couple of examples.

\begin{example}\label{EX: integrales}
Let us show how to compute primitives of the form
\[
\int R\left(\sqrt[3]{t},\sqrt{1-\sqrt[3]{t^2}}\right)\,dt
\]
where $R$ is any rational function. Taking into account Example \ref{EX: reparam circle cubicroot}, we have that
\[ \mT(t)= \left(\dfrac{(2t)^3}{(1+t^2)^3},\dfrac{2t}{1+t^2},\dfrac{1-t^2}{1+t^2}\right)\]
and
\[ \mT^*(t)= \left(\dfrac{(2t)^3}{(1+t^2)^3},\dfrac{2t}{1+t^2},\dfrac{t^2-1}{1+t^2}\right).\]
Now, the integral can be converted, by $t=\left(\frac{2u}{1+u^2}\right)^3$, into the integral
\[\int R\left(\dfrac{2u}{1+u^2},\frac{u^2-1}{1+u^2}\right)\cdot\frac{-24 u^2 (u^2-1)}{(u^2+1)}^4\,du
\]
which has a rational function integrand. Note that
$u=\dfrac{1+\sqrt{-\sqrt[3]{t^2}+1}}{\sqrt[3]{t}}$.
\end{example}

\begin{example}
We illustrate our application to the classical integration method of integrals of type
\[ \mathrm{I}:=\int R(t,\sqrt{at^2+bt+c}) \,dt, \,\,\,\text{where}\,\,\, R\in \C(v_1,v_2) \]
which are typically solved by using the so-called Euler substitutions, see \cite{GradshteynRyzhik2007a}. We consider the radical tower
\[ \C(t)\subset \C(t)(\delta), \,\,\text{where}\,\, \delta^2=at^2+bt+c. \]
The tower variety $\VT$ is the plane curve defined by $\Delta^2=aT^2+bT+c$, a conic and hence rational. A birational parametrization of $\VT$ is
\[ \mT=\mT^*=\left( \frac{u^2-c}{-2 \sqrt{a}\, u+b}, \frac{-\sqrt{a}\, u^2-\sqrt{a}\, c+b u}{-2\sqrt{a}\, u+b}\right). \]
Therefore, the change of parameter is
\[ t=\frac{u^2-c}{-2 \sqrt{a}\, u+b}\]
and
\[ u= \sqrt{at^2+bt+c}-t\sqrt{a}. \]
So the integral turns to be
\[ \mathrm{I}= \int R\left(\dfrac{u^2-c}{-2 \sqrt{a}\, u+b},\dfrac{\sqrt{a}\,u^2+\sqrt{a}\,c-bu}{2\sqrt{a}\,u-b} \right)
\dfrac{-2(\sqrt{a}\,u^2+\sqrt{a}\,c-b u)}{(2\sqrt{a}\,u-b)^2}
\, du \]
which is now a rational integral.
\end{example}

\end{document}